\documentclass[oneside,english]{amsart}
\usepackage[T1]{fontenc}
\usepackage[latin9]{inputenc}
\usepackage{amsthm}
\usepackage{amstext}
\usepackage{amssymb}
\usepackage{esint}
\usepackage{graphicx}
\usepackage{enumerate}
\usepackage{amscd}
\usepackage{color}

\makeatletter
\newtheorem{theorem}{Theorem}[section]
\newtheorem{lemma}[theorem]{Lemma}

\numberwithin{figure}{section}
\theoremstyle{definition}
\newtheorem{definition}[theorem]{Definition}

\theoremstyle{remark}

\numberwithin{equation}{section}
\makeatother

\usepackage{color}
\usepackage{todonotes}

\newcommand{\Om} {\Omega}

\usepackage{babel}

\begin{document}

\title[The nonlinear Steklov problem]
{The nonlinear Steklov problem in outward cuspidal domains}

\author{Pier Domenico Lamberti and Alexander Ukhlov}

\begin{abstract}
In this article, we consider the nonlinear Steklov eigenvalue problem in outward cuspidal domains.
Using the compactness of the weighted trace embedding we obtain the variational characterization of the first non-trivial eigenvalue and prove the existence of a corresponding weak solution.
\end{abstract}

\maketitle

\footnotetext{\textbf{Key words and phrases:} Sobolev spaces, Steklov eigenvalue problem, $p$-Laplacian.}

\footnotetext{\textbf{2020
Mathematics Subject Classification:} 35P30,46E35.}

\section{Introduction}

The classical Steklov eigenvalue problem goes back to the original work of Steklov \cite{S02} and has been studied throughout the last century. However, its correct formulation 
in non-Lipschitz domains $\Omega \subset \mathbb{R}^n$ remains a complicated long-standing problem. The main obstacle comes from the fact that Sobolev trace 
operators can be non-compact in non-Lipschitz domains \cite{M,MP}, and in such cases the spectrum can be continuous \cite{NT}. In \cite{NT}, it was proved that for  an outward cuspidal domain with cusp function $\gamma (t)=t^{\alpha}$  the classical  unweighted Steklov problem has a discrete spectrum when $1<\alpha<2$. For $\alpha=2$ the spectrum contains a continuous part, while for $\alpha>2$ the point $\lambda_0=0$ belongs to the continuous spectrum.

In \cite{GGU25}, an approach to the classical Steklov eigenvalue problem in outward $\gamma$-cuspidal domains was proposed, based on introducing weights related to the 
geometry of the domains. In the present work we consider the nonlinear Steklov $p$-eigenvalue problem, $p\ne 2$,
\begin{equation}
\label{WeSP}
\begin{cases}
-\mathrm{div}( |\nabla u|^{p-2}\nabla u ) = 0 & \text{in } \Omega_{\gamma},\\
|\nabla u|^{p-2} \nabla u \cdot \nu = \lambda\, w\, |u|^{p-2}u & \text{on } \partial\Omega_{\gamma},
\end{cases}
\end{equation}
where $\Omega_{\gamma}\subset\mathbb R^n$ is an outward $\gamma$-cuspidal domain and 
the weight $w$ is induced by the cusp function $\gamma$, see \eqref{cusp}.

The nonlinear problems of this type arise in several fields of continuum mechanics, such as fluid mechanics and elasticity (see, e.g., \cite{BS, CPV}), and have attracted growing 
attention in recent years (see, e.g., \cite{B21, BR, CGGS, FL, MN10, XW}).

The linear case ($p=2$) was considered in \cite{GGU25}, where it was proved that the spectrum is discrete. However, in the nonlinear case the corresponding Friedrichs--Poincar\'e type inequality has the critical role. In the present work we establish this inequality and on this base we prove the variational characterization of the first non-trivial eigenvalue $\lambda_p$. Moreover, we show that the orthogonality condition
\[
\int_{\partial \Omega_{\gamma}} |u|^{p-2} u\, w\, ds = 0
\]
implies that the corresponding eigenfunction is non-trivial and that $\lambda_p$ is positive.

The proposed method is based on the compactness of the weighted trace embedding operator
$$
i: W^{1,p}(\Omega_{\gamma})\hookrightarrow L^p_w(\partial\Omega_{\gamma}),\quad 1<p<\infty,
$$
proved in \cite{GV}.

The main result of the article can be stated as follows:

\medskip
\noindent
{\it Let $\Omega_{\gamma}$ be an outward $\gamma$-cuspidal domain of the form \eqref{cusp}. Then for the weighted Steklov $p$-eigenvalue problem \eqref{WeSP}, $1<p<\infty$, there exists 
$u\in W^{1,p}(\Omega_{\gamma})\setminus\{0\}$ such that the first non-trivial eigenvalue $\lambda_{p}$ has the variational characterization
\begin{multline*}
\lambda_{p}=
\inf \left\{\frac{\|\nabla v\|_{L^p(\Omega_{\gamma})}^p}{\|v\|_{L^p_w(\partial \Omega_{\gamma})}^p} : 
v \in W^{1,p}(\Omega_{\gamma}) \setminus \{0\},
\int_{\partial \Omega_{\gamma}} |v|^{p-2}v w\,ds=0 \right\}\\
=\frac{\|\nabla u\|_{L^p(\Omega_{\gamma})}^p}{\|u\|_{L^p_w(\partial \Omega_{\gamma})}^p}.
\end{multline*}
}

\medskip
The variational characterization thus gives the existence of a weak solution to the nonlinear Steklov eigenvalue problem in outward cuspidal domains.

Outward cuspidal geometries have recently received considerable attention in the study of partial differential equations, in connection with geometric singularities and 
weighted functional inequalities; see, for example, \cite{GU09,GU17,KLP,KUZ,MP}. In this setting, the present article establishes a variational characterization of 
nonlinear weighted Steklov eigenvalues in outward cuspidal domains.

\section{Sobolev spaces}

Let us recall the basic notions of the Sobolev spaces. Let $\Omega$ be an open subset of $\mathbb R^n$. The Sobolev space $W^{1,p}(\Omega)$, $1<p<\infty$, is defined \cite{M}
as a Banach space of locally integrable weakly differentiable functions
$u:\Omega\to\mathbb{R}$ equipped with the following norm:
$$
\|u\|_{W^{1,p}(\Omega)}=\left(\int_{\Omega}|\nabla u(x)|^p\,dx+\int_{\Omega}|u(x)|^p\,dx\right)^\frac{1}{p}.
$$

The Sobolev space $W^{1,p}(\Omega)$ arises as the space of weak solutions to partial differential equations~\cite{S36}. In this sense, in contrast to the approach of L.~Schwartz~\cite{LS50},
these spaces can be referred to as Besov--Sobolev spaces~\cite{BIN}.
In this setting, the Sobolev space $W^{1,p}(\Omega)$ coincides with the closure
of the space of smooth functions $C^{\infty}(\Omega)$ in the $W^{1,p}(\Omega)$ norm~\cite{BIN,M}. 

It follows that functions in the Sobolev space $W^{1,p}(\Omega)$ are defined (as elements of a Banach space) up to a set of $p$-capacity zero, in accordance with convergence in the topology of $W^{1,p}(\Omega)$.

The following result, can be found, for example in \cite[Proposition~9.1]{Br11}, \cite[Paragraph~1.4]{IC} and \cite{M}.
\begin{lemma}
\label{Xuthm}
The space $W^{1,p}(\Omega)$, $1<p<\infty$, is real separable and uniformly convex Banach space.
\end{lemma}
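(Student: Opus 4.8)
The plan is to prove that $W^{1,p}(\Omega)$ is (i) real, (ii) separable, and (iii) uniformly convex, treating each property in turn. The statement is entirely standard, so the proof is really a matter of correctly reducing these assertions to well-known functional-analytic facts about $L^p$ spaces.

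\smallskip

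\noindent\emph{Separability and the embedding into a product space.} First I would introduce the isometric linear embedding
\[
T:W^{1,p}(\Omega)\to L^p(\Omega)\times\bigl(L^p(\Omega)\bigr)^n,\qquad Tu=(u,\nabla u),
\]
where the target is equipped with the norm $\|(f,g_1,\dots,g_n)\|=\bigl(\|f\|_{L^p}^p+\sum_i\|g_i\|_{L^p}^p\bigr)^{1/p}$. By the very definition of the $W^{1,p}$ norm, $T$ is a linear isometry onto its image, so $W^{1,p}(\Omega)$ is isometrically isomorphic to the closed subspace $T\bigl(W^{1,p}(\Omega)\bigr)$ of the product. Completeness of $W^{1,p}(\Omega)$ (equivalently, closedness of the image) follows from the standard fact that an $L^p$-convergent sequence whose weak gradients also converge in $L^p$ has a limit whose weak gradient is the limit of the gradients; this is what makes $W^{1,p}$ a Banach space. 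Since $L^p(\Omega)$ is separable for $1<p<\infty$ (smooth compactly supported functions, or simple functions with rational coefficients on a countable generating family of measurable sets, are dense), a finite product of separable metric spaces is separable, and any subspace of a separable metric space is separable. Hence $W^{1,p}(\Omega)$ is separable.

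\smallskip

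\noindent\emph{Uniform convexity.} For the uniform convexity I would again use the isometric embedding $T$, reducing the question to uniform convexity of the product space. The key point is that $L^p(\Omega)$ is uniformly convex for $1<p<\infty$, which is the classical Clarkson inequality. One then needs that a finite $\ell^p$-product of uniformly convex spaces is itself uniformly convex, and that uniform convexity passes to closed subspaces (indeed to arbitrary subspaces, since the defining inequality involves only the norm of elements of the subspace). Since $T$ is an isometry onto a closed subspace, uniform convexity transfers back to $W^{1,p}(\Omega)$. The fact that the space is a \emph{real} Banach space is immediate from the definition, as the functions $u:\Omega\to\mathbb{R}$ are real-valued and the scalars are real.

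\smallskip

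The only genuine subtlety, and the step I would flag as the main obstacle, is the stability of uniform convexity under finite $\ell^p$-products: verifying the modulus-of-convexity estimate for the product norm requires combining the Clarkson inequalities coordinatewise with an elementary convexity inequality for the power function $t\mapsto t^p$, rather than being a one-line citation. Everything else (completeness, separability via the product embedding, reduction to $L^p$) is routine. In practice one simply invokes the cited references \cite{Br11,IC,M}, where these product and subspace stability facts are established, so the \emph{proof} amounts to assembling them around the isometry $T$.
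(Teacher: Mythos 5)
Your overall strategy (embed $W^{1,p}(\Omega)$ isometrically into a product of $L^p$ spaces and pull back separability and uniform convexity) is exactly the route taken in the references the paper cites for this lemma; the paper itself offers no proof beyond those citations. However, there is a genuine gap in the uniform convexity part. The paper's norm is
\[
\|u\|_{W^{1,p}(\Omega)}=\Bigl(\int_{\Omega}|\nabla u(x)|^p\,dx+\int_{\Omega}|u(x)|^p\,dx\Bigr)^{1/p},
\]
where $|\nabla u(x)|$ is the \emph{Euclidean} length of the gradient vector. Your map $Tu=(u,\partial_1 u,\dots,\partial_n u)$ into $L^p(\Omega)\times\bigl(L^p(\Omega)\bigr)^n$ with the norm $\bigl(\|f\|_{L^p}^p+\sum_i\|g_i\|_{L^p}^p\bigr)^{1/p}$ pulls back to $\bigl(\int_{\Omega}|u|^p\,dx+\int_{\Omega}\sum_i|\partial_i u|^p\,dx\bigr)^{1/p}$, and pointwise $\sum_i|\partial_i u|^p\neq\bigl(\sum_i|\partial_i u|^2\bigr)^{p/2}$ unless $p=2$ or $n=1$. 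So $T$ is \emph{not} an isometry for the paper's norm; it is only an isomorphism onto its image --- and the discrepancy occurs precisely in the nonlinear case $p\neq 2$ that this paper is about. For separability and completeness this is harmless, since those properties are invariant under equivalent norms. But uniform convexity is a property of the specific norm and is \emph{not} preserved by equivalent renormings (the $\ell^1$ and $\ell^2$ norms on $\mathbb{R}^2$ are equivalent, yet only the latter is uniformly convex). As written, your argument establishes uniform convexity of an equivalent norm on $W^{1,p}(\Omega)$, not of the norm the lemma is about.

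The repair is modest but has real content: take the target to be $L^p(\Omega)\times L^p(\Omega;\mathbb{R}^n)$, where the second factor carries the norm $\bigl(\int_{\Omega}|g(x)|^p\,dx\bigr)^{1/p}$ with $|g(x)|$ Euclidean, and combine the two factors in the $\ell^p$ sense; then $Tu=(u,\nabla u)$ \emph{is} an isometry for the paper's norm. This target is isometric to $L^p(\Omega;X)$ with $X=\mathbb{R}^{1+n}$ normed by $|(t,v)|_{*}=\bigl(|t|^p+|v|_{2}^p\bigr)^{1/p}$, $|v|_2$ being the Euclidean norm, so what you actually need is uniform convexity of a \emph{vector-valued} $L^p$ space. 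For $p\ge 2$ this follows by integrating the pointwise Clarkson inequality, which holds for vectors in a Hilbert space (parallelogram law plus the power-mean inequality) and hence, summing the scalar and Euclidean contributions, for $|\cdot|_{*}$; for $1<p<2$ one uses the second Clarkson (or Hanner) inequality in the vector-valued setting, or cites Day's theorem that $L^p(\mu;X)$ is uniformly convex whenever $X$ is uniformly convex and $1<p<\infty$. With that adjustment, and keeping your subspace and product-stability observations, the proof is complete; your separability argument needs no change.
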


Recall the notion of outward $\gamma$-cuspidal domains \cite{GV,KUZ}.  Let $\gamma\in (n,\infty)$ and consider functions $g:[0,1] \to [0,\infty)$ defined by $g(t) = t^{\alpha}$, where $\alpha=(\gamma-1)/(n-1)$.  Denote $x'=(x_1,...,x_{n-1})$. Then an outward $\gamma$-cuspidal domain $\Omega_{\gamma}\subset\mathbb{R}^n$, $n\geq 2$, is defined by
\begin{multline}
\label{cusp}
\Omega_{\gamma}=\\
\left\{(x',x_n)\in \mathbb R^{n-1}\times \mathbb R : \sqrt{x_1^2+...+x_{n-1}^2}<g(x_n), 0<x_n\leq 1\right\}\cup B^n\left((0,2), \sqrt{2}\right),
\end{multline}
where $B^n\left((0,2), \sqrt{2}\right)\subset\mathbb{R}^n$ is the open ball of radius $\sqrt{2}$ centered at $(0,2)\in\mathbb{R}^{n-1}\times\mathbb{R}$.

Note that the intrinsic geometry of outward cuspidal domains in $\mathbb R^n$ is characterized by the parameter $\gamma \in (n,\infty)$ \cite{GU09,KUZ}, while the exponent $\alpha = (\gamma - 1)/(n - 1)$ appears naturally in the power-type cusp function $g(t)=t^{\alpha}$. We follow this standard notation.

Let $E\subset\mathbb R^n$ be a Borel set. Then $E$ is said to be $H^{m}$-rectifiable set \cite{Fe69}, if $E$ is of Hausdorff dimension $m$, and there exists a countable collection $\{\varphi_i\}_{i\in\mathbb{N}}$ of Lipschitz continuous mappings
$$
\varphi_i: \mathbb R^m\to\mathbb R^n, 
$$   
such that the $m$-Hausdorff measure $H^m$ of the set $E\setminus \bigcup_{i=1}^{\infty}\varphi_i(\mathbb R^m)$
is zero. 

Let $\Omega\subset\mathbb R^n$ be a domain with $H^{n-1}$-rectifiable boundary $\partial\Omega$ and $w:\partial\Omega\to\mathbb R$ be a non-negative continuous function. We consider the weighted Lebesgue space $L^{p}_w(\partial\Om)$ with the following norm
$$
\|u\|_{L^p_w(\partial\Omega)}=\left(\int_{\partial\Om}|u(x)|^p w(x)\,ds(x)\right)^\frac{1}{p},
$$
where $ds$ is the $(n-1)$-dimensional surface measure on $\partial\Omega$.

In accordance with the outward $\gamma$-cuspidal domain $\Omega_{\gamma}$ defined by (\ref{cusp}), we define a continuous weight function $w:\partial\Omega_{\gamma}\to\mathbb R$ setting 
\begin{equation}
\label{weight}
w(x_1,...,x_{n-1},x_n)=
\begin{cases}
g(x_n),\,\,&\text{if}\,\, \sqrt{x_1^2+...+x_{n-1}^2}=g(x_n)<1,
\\
1,\,\,&\text{if}\,\, \sqrt{x_1^2+...+x_{n-1}^2}=g(x_n)\geq 1.
\end{cases}
\end{equation}

The following theorem is a direct consequence of  \cite[Theorem 4.1]{GV} and of the fact that the domains of the class $OP_{\varphi}$ considered in \cite{GV} 
are bi-Lipschitz equivalent to the outward cuspidal domains $\Omega_{\gamma}$, see \cite{KZ}.

\begin{theorem}
\label{trace}
Let $\Omega_{\gamma}\subset\mathbb R^n$ be the outward $\gamma$-cuspidal domain defined by \eqref{cusp}, 
and let $w$ be the weight defined by \eqref{weight}. 
Then the trace embedding
\[
i: W^{1,p}(\Omega_{\gamma})\hookrightarrow L^p_w(\partial\Omega_{\gamma}),\qquad 1<p<\infty,
\]
is compact.
\end{theorem}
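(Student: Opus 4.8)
The plan is to transport the compactness statement of \cite[Theorem~4.1]{GV} from a domain of class $OP_\varphi$ to $\Omega_\gamma$ along the bi-Lipschitz equivalence provided by \cite{KZ}. Let $\Omega_\varphi$ be the $OP_\varphi$-domain matched to $\Omega_\gamma$, and let $\Phi:\Omega_\varphi\to\Omega_\gamma$ be the bi-Lipschitz homeomorphism from \cite{KZ}; being bi-Lipschitz, it restricts to a bi-Lipschitz homeomorphism of the ($H^{n-1}$-rectifiable) boundaries $\partial\Omega_\varphi\to\partial\Omega_\gamma$. The first task is to record that such a map induces bounded linear isomorphisms between the relevant function spaces; the second, and only delicate, task is to check that \eqref{weight} is the weight for which $\Phi$ identifies the two weighted boundary norms. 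Once these are in place, the transfer of compactness is purely functorial.

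First I would treat the Sobolev spaces. The composition operator $C_\Phi u=u\circ\Phi$ maps $W^{1,p}(\Omega_\gamma)$ into $W^{1,p}(\Omega_\varphi)$: since $\Phi$ is bi-Lipschitz, $D\Phi$ exists almost everywhere and satisfies $0<c\le|\det D\Phi|\le C$ together with $|D\Phi|,|D\Phi^{-1}|\le L$, so the chain rule gives $\nabla(u\circ\Phi)=(\nabla u\circ\Phi)\,D\Phi$ and the change of variables $x=\Phi(y)$ yields two-sided bounds
\[
c_1\|u\|_{W^{1,p}(\Omega_\gamma)}\le\|u\circ\Phi\|_{W^{1,p}(\Omega_\varphi)}\le c_2\|u\|_{W^{1,p}(\Omega_\gamma)}.
\]
Hence $C_\Phi$ is a Banach-space isomorphism with inverse $C_{\Phi^{-1}}$, and, because $\Phi$ maps boundary to boundary, the trace of $u\circ\Phi$ on $\partial\Omega_\varphi$ equals the trace of $u$ precomposed with $\Phi|_{\partial\Omega_\varphi}$. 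The area formula for Lipschitz maps between rectifiable sets shows in addition that the surface measures correspond with a tangential Jacobian $J$ bounded above and below by positive constants.

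The heart of the matter is the weight. Writing $\widetilde w$ for the weight under which \cite[Theorem~4.1]{GV} asserts compactness of $i_\varphi:W^{1,p}(\Omega_\varphi)\hookrightarrow L^p_{\widetilde w}(\partial\Omega_\varphi)$, the change of variables $x=\Phi(y)$ on the boundary gives
\[
\int_{\partial\Omega_\gamma}|u|^p\,w\,ds=\int_{\partial\Omega_\varphi}|u\circ\Phi|^p\,(w\circ\Phi)\,J\,ds.
\]
Thus the boundary composition operator $C_\Phi^{\partial}:L^p_w(\partial\Omega_\gamma)\to L^p_{\widetilde w}(\partial\Omega_\varphi)$ is an isomorphism precisely when $(w\circ\Phi)\,J\asymp\widetilde w$. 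Verifying this comparison is the main obstacle: one must check that the explicit weight $w=g(x_n)$ near the cusp in \eqref{weight}, multiplied by the tangential Jacobian of $\Phi$, is comparable to the weight built into the $OP_\varphi$ setting of \cite{GV}. The choice \eqref{weight} is made exactly so that this holds; the computation reduces to comparing the surface-measure densities of $\partial\Omega_\gamma$ and $\partial\Omega_\varphi$ near the cusp point, using the power law $g(t)=t^\alpha$ with $\alpha=(\gamma-1)/(n-1)$.

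With these isomorphisms in hand the conclusion is formal. The trace embedding for $\Omega_\gamma$ factors as
\[
i_\gamma=\big(C_\Phi^{\partial}\big)^{-1}\circ i_\varphi\circ C_\Phi,
\]
a composition of the compact operator $i_\varphi$ (by \cite[Theorem~4.1]{GV}) with bounded linear isomorphisms. Since composing a compact operator with bounded operators yields a compact operator, $i_\gamma$ is compact, which is the assertion. All of the genuine work therefore sits in the weight comparison of the previous step, the transfer of compactness along $\Phi$ being entirely functorial.
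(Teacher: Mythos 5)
Your proposal follows essentially the same route as the paper: the paper states the theorem as a ``direct consequence'' of \cite[Theorem~4.1]{GV} together with the bi-Lipschitz equivalence of $OP_{\varphi}$-domains and outward cuspidal domains from \cite{KZ}, which is exactly the transfer-of-compactness argument you formalize via the composition operators $C_\Phi$ and $C_\Phi^{\partial}$. Your writeup is in fact more explicit than the paper's (which gives no details at all), and you correctly isolate the weight comparison $(w\circ\Phi)\,J\asymp\widetilde w$ as the only substantive step --- a point the paper, like you, leaves implicit in the cited references.
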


Remark that the range of exponents $p$ is not explicitly stated in \cite{GV}. 
However, since the trace embedding in \cite{GV} is of the form $W^{1,p}\to L^p_{\varphi}$ 
(the same exponent of integrability) and the weight $\varphi$ reflects the geometry of the outward cusp, 
their argument applies for all $1<p<\infty$.

Let us recall the Sobolev embedding theorem in outward $\gamma$-cuspidal domains, see \cite{GG94,GU09} and \cite[p. 169, Theorem~8]{bur}, \cite[Theorem~5.2.2]{M}. 

\begin{theorem}\label{thmemb}
Let $\Omega_{\gamma}\subset\mathbb R^n$ be an outward $\gamma$-cuspidal domain defined by (\ref{cusp}). Then the embedding operator
$$
W^{1,p}(\Omega_{\gamma})\hookrightarrow L^{p}(\Omega_{\gamma})
$$
is compact for any $1<p<\infty$.
\end{theorem}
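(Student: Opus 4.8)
The plan is to combine the classical Rellich--Kondrachov theorem on the regular part of $\Omega_{\gamma}$ with a uniform smallness estimate for the $L^p$-mass carried near the tip of the cusp, the latter being extracted from a super-critical Sobolev embedding together with the vanishing of the volume of the cuspidal tail. First I would fix $\epsilon\in(0,1)$ and split
\[
\Omega_{\gamma}=D_{\epsilon}\cup C_{\epsilon},\qquad D_{\epsilon}=\Omega_{\gamma}\cap\{x_n>\epsilon\},\quad C_{\epsilon}=\Omega_{\gamma}\cap\{0<x_n<\epsilon\}.
\]
For each fixed $\epsilon>0$ the set $D_{\epsilon}$ is a bounded Lipschitz domain (the cuspidal wall $|x'|=x_n^{\alpha}$ is smooth away from the tip and is glued to the ball along $x_n=1$), so on $D_{\epsilon}$ the embedding $W^{1,p}(D_{\epsilon})\hookrightarrow L^p(D_{\epsilon})$ is compact by Rellich--Kondrachov for every $1<p<\infty$. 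A direct computation gives
\[
|C_{\epsilon}|=\omega_{n-1}\int_0^{\epsilon}t^{\alpha(n-1)}\,dt=\frac{\omega_{n-1}}{\gamma}\,\epsilon^{\gamma},
\]
which tends to $0$ as $\epsilon\to0^{+}$, since $\alpha(n-1)+1=\gamma$; this is the quantitative form of the cusp collapsing to a point.

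The decisive ingredient is a higher-integrability bound: $W^{1,p}(\Omega_{\gamma})\hookrightarrow L^q(\Omega_{\gamma})$ continuously for some exponent $q>p$. A scaling analysis at height $t$, where the transverse width $t^{\alpha}$ is much smaller than the longitudinal scale $t$ (so that the transverse gradient dominates), shows that the admissible exponents are $q\le q^{*}=\gamma p/(\gamma-\alpha p)$ when $p<\gamma/\alpha$, and all $q<\infty$ when $p\ge\gamma/\alpha$; in either regime $q^{*}>p$ for every $1<p<\infty$, since $\gamma p/(\gamma-\alpha p)>p$ is equivalent to $\alpha p>0$. This continuous embedding is exactly the content of the Sobolev embedding theorem for outward cuspidal domains cited above \cite{GG94,GU09,bur,M}. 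Granting it, Hölder's inequality yields the uniform tail estimate
\[
\|v\|_{L^p(C_{\epsilon})}\le\|v\|_{L^q(C_{\epsilon})}\,|C_{\epsilon}|^{\frac1p-\frac1q}\le C\,\|v\|_{W^{1,p}(\Omega_{\gamma})}\,|C_{\epsilon}|^{\frac1p-\frac1q},
\]
valid for all $v\in W^{1,p}(\Omega_{\gamma})$, with a constant $C$ independent of $\epsilon$ and with $\tfrac1p-\tfrac1q>0$.

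To finish, take a bounded sequence $\{u_k\}$ in $W^{1,p}(\Omega_{\gamma})$. By Lemma~\ref{Xuthm} the space is reflexive, so after passing to a subsequence $u_k\rightharpoonup u$ weakly in $W^{1,p}(\Omega_{\gamma})$, with $\|u\|_{W^{1,p}(\Omega_{\gamma})}\le\liminf_k\|u_k\|_{W^{1,p}(\Omega_{\gamma})}$. For each fixed $\epsilon$, the compact embedding on $D_{\epsilon}$ forces $u_k\to u$ strongly in $L^p(D_{\epsilon})$, while the tail estimate applied to $v=u_k-u$ gives $\|u_k-u\|_{L^p(C_{\epsilon})}\le C'\,|C_{\epsilon}|^{\frac1p-\frac1q}$ uniformly in $k$. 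Splitting $\|u_k-u\|_{L^p(\Omega_{\gamma})}^p$ over $D_{\epsilon}$ and $C_{\epsilon}$, letting $k\to\infty$ and then $\epsilon\to0$, I obtain $u_k\to u$ strongly in $L^p(\Omega_{\gamma})$, which proves compactness. The main obstacle is the super-critical embedding $W^{1,p}\hookrightarrow L^q$ with $q>p$: this is the only step in which the precise cusp geometry enters, through the identity $\alpha(n-1)+1=\gamma$ and the anisotropic scaling of the gradient, and it is exactly what upgrades the trivial continuous embedding $W^{1,p}\hookrightarrow L^p$ to a compact one; once it is in hand, the remainder is the standard Rellich-plus-tail gluing argument.
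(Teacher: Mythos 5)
The paper does not actually prove Theorem~\ref{thmemb}: it recalls it from \cite{GG94,GU09}, \cite[p.~169, Theorem~8]{bur} and \cite[Theorem~5.2.2]{M}. Your argument reconstructs the standard proof that lies behind those citations, and its architecture is correct: the splitting $\Omega_\gamma=D_\epsilon\cup C_\epsilon$ with $D_\epsilon$ Lipschitz, the volume computation $|C_\epsilon|=\omega_{n-1}\epsilon^{\gamma}/\gamma$ (correct, since $\alpha(n-1)+1=\gamma$), Rellich--Kondrachov on $D_\epsilon$, a H\"older tail estimate from a continuous embedding $W^{1,p}(\Omega_\gamma)\hookrightarrow L^q(\Omega_\gamma)$ with some $q>p$, and the double limit $k\to\infty$, $\epsilon\to0^{+}$. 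The functional-analytic steps are fine as well: a compact embedding sends the weakly convergent subsequence to a strongly convergent one on each $D_\epsilon$, so no diagonal argument is needed, and the tail bound is uniform in $k$.

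There is, however, a concrete error in the justification of your ``decisive ingredient'': the claimed critical exponent $q^{*}=\gamma p/(\gamma-\alpha p)$ is wrong, and the heuristic ``the transverse gradient dominates'' is backwards. For the \emph{validity} of an embedding the binding test functions are the cheapest ones, and since $\alpha>1$ the longitudinal bump (varying in $x_n$ over scale $t$, with $|\nabla u|\sim 1/t\ll t^{-\alpha}$) is cheaper than the transverse one. Taking $u\equiv1$ on $\{x_n<t\}$ and tapering over $t<x_n<2t$ gives $\|u\|_{L^q(\Omega_\gamma)}^q\gtrsim t^{\gamma}$ while $\|u\|_{W^{1,p}(\Omega_\gamma)}^p\sim t^{\gamma-p}$, which forces $q\le \gamma p/(\gamma-p)$ for $p<\gamma$: the cusp behaves like a space of ``dimension'' $\gamma$, and this is the exponent actually appearing in \cite{GU09,M,MP}. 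Your transverse scaling yields only the weaker necessary condition $q\le\gamma p/(\gamma-\alpha p)$, which you mistook for the critical threshold; at that exponent the embedding is false. For instance, with $n=2$, $\alpha=2$ (so $\gamma=3$) and $p=6/5$, your formula gives $q^{*}=6$, but the longitudinal bump has $\|u\|_{L^{6}}\gtrsim t^{1/2}$ and $\|u\|_{W^{1,p}}\sim t^{3/2}$, so the $L^{6}$ bound blows up as $t\to0$, whereas the true critical exponent is $2$. Fortunately the error does not sink your proof: all you use is a continuous embedding into $L^q$ for \emph{some} $q>p$, and since $\gamma p/(\gamma-p)>p$ for every $1<p<\gamma$ (and every finite $q$ is admissible for $p\ge\gamma$), replacing your exponent by the correct one restores the H\"older tail estimate verbatim and the Rellich-plus-tail gluing goes through unchanged.
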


By using these theorems, we prove Friedrichs--Poincar\'e type inequality on cuspidal domains.

\begin{theorem}\label{FP}
Let $1<p<\infty$ and let $\Omega_{\gamma}\subset\mathbb{R}^n$ be an outward $\gamma$-cuspidal domain defined by \eqref{cusp}, and let the weight $w$ be defined by \eqref{weight}. Then there exists a constant $C=C(\Omega_{\gamma},p,w)>0$ such that 
\[
\|u\|_{L^p(\Omega_{\gamma})}\leq C\,\|\nabla u\|_{L^p(\Omega_{\gamma})},
\]
for all $u\in W^{1,p}(\Omega_{\gamma})$ satisfying the condition
\[
\int_{\partial\Omega_{\gamma}} |u|^{p-2} u \, w \, ds = 0.
\]
\end{theorem}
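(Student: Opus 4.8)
The plan is to argue by contradiction, following the classical compactness scheme for Poincar\'e-type inequalities. Suppose the asserted inequality fails. Then for every $k\in\N$ there is a function $u_k\in W^{1,p}(\Omega_{\gamma})$ satisfying $\int_{\partial\Omega_{\gamma}}|u_k|^{p-2}u_k\,w\,ds=0$ and $\|u_k\|_{L^p(\Omega_{\gamma})}>k\,\|\nabla u_k\|_{L^p(\Omega_{\gamma})}$. Normalizing so that $\|u_k\|_{L^p(\Omega_{\gamma})}=1$, I obtain $\|\nabla u_k\|_{L^p(\Omega_{\gamma})}<1/k\to 0$, so that $(u_k)$ is bounded in $W^{1,p}(\Omega_{\gamma})$. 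By Lemma~\ref{Xuthm} the space $W^{1,p}(\Omega_{\gamma})$ is reflexive, hence after passing to a subsequence (not relabelled) I may assume $u_k\rightharpoonup u$ weakly in $W^{1,p}(\Omega_{\gamma})$. The compact Sobolev embedding of Theorem~\ref{thmemb} gives $u_k\to u$ strongly in $L^p(\Omega_{\gamma})$, so $\|u\|_{L^p(\Omega_{\gamma})}=1$ and in particular $u\neq 0$; the compact trace embedding of Theorem~\ref{trace} gives $u_k\to u$ strongly in $L^p_w(\partial\Omega_{\gamma})$.

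Next I would identify the weak limit. By weak lower semicontinuity of the seminorm $v\mapsto\|\nabla v\|_{L^p(\Omega_{\gamma})}$ one has $\|\nabla u\|_{L^p(\Omega_{\gamma})}\leq\liminf_k\|\nabla u_k\|_{L^p(\Omega_{\gamma})}=0$, whence $\nabla u=0$ almost everywhere. Since $\Omega_{\gamma}$ is connected, $u$ is a nonzero constant, say $u\equiv c$ with $c\neq 0$.

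It remains to pass to the limit in the nonlinear orthogonality condition, which is the step I expect to be the main obstacle, since it involves a nonlinear boundary functional rather than a linear one. The point is that the functional $v\mapsto\int_{\partial\Omega_{\gamma}}|v|^{p-2}v\,w\,ds$ is continuous on $L^p_w(\partial\Omega_{\gamma})$. Writing $b(s)=|s|^{p-2}s$ and $p'=p/(p-1)$, the growth bound $|b(s)|\leq|s|^{p-1}$ together with $|b(v)|^{p'}=|v|^{p}$ shows that the superposition operator $v\mapsto b(v)$ maps $L^p_w(\partial\Omega_{\gamma})$ continuously into $L^{p'}_w(\partial\Omega_{\gamma})$, by the standard continuity theorem for Nemytskii operators on Lebesgue spaces (applied with the measure $w\,ds$). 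Pairing $b(v)$ with the constant function $1$, which belongs to $L^p_w(\partial\Omega_{\gamma})$ because $\partial\Omega_{\gamma}$ is compact and $w$ is bounded, and using the weighted H\"older inequality, yields the desired continuity.

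Applying this continuity to $u_k\to c$ in $L^p_w(\partial\Omega_{\gamma})$ gives
\[
|c|^{p-2}c\int_{\partial\Omega_{\gamma}}w\,ds=\int_{\partial\Omega_{\gamma}}|c|^{p-2}c\,w\,ds=\lim_{k\to\infty}\int_{\partial\Omega_{\gamma}}|u_k|^{p-2}u_k\,w\,ds=0.
\]
Since $w\geq 0$ is continuous and equals $1$ on a subset of $\partial\Omega_{\gamma}$ of positive surface measure, one has $\int_{\partial\Omega_{\gamma}}w\,ds>0$, and therefore $|c|^{p-2}c=0$, i.e. $c=0$. This contradicts $u\neq 0$ and proves the inequality. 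The delicate feature of the argument is precisely this nonlinear passage to the limit on the boundary, which is made possible by the strong convergence in $L^p_w(\partial\Omega_{\gamma})$ coming from the compactness in Theorem~\ref{trace}, and would fail under mere weak trace convergence.
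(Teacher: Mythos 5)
Your proposal is correct and follows essentially the same compactness-by-contradiction argument as the paper: normalization, boundedness in $W^{1,p}(\Omega_{\gamma})$, strong convergence via the compact embeddings of Theorems~\ref{thmemb} and~\ref{trace}, identification of the limit as a constant, and elimination of that constant through the weighted boundary orthogonality. The only notable difference is that you explicitly justify the passage to the limit in the nonlinear boundary functional via continuity of the Nemytskii operator $v\mapsto|v|^{p-2}v$ from $L^p_w(\partial\Omega_{\gamma})$ to $L^{p/(p-1)}_w(\partial\Omega_{\gamma})$, a step the paper performs without comment (and you use weak lower semicontinuity to get $\nabla u=0$ where the paper uses strong convergence of the gradients, an immaterial variation).
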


\begin{proof}
We argue by contradiction. Assume that the conclusion fails. Then for every $n\in\mathbb{N}$ there exists $u_n\in W^{1,p}(\Omega_{\gamma})\setminus\{0\}$ such that
\[
\int_{\partial\Omega_{\gamma}} |u_n|^{p-2} u_n \, w \, ds = 0
\quad\text{and}\quad
\|u_n\|_{L^p(\Omega_{\gamma})} > n\,\|\nabla u_n\|_{L^p(\Omega_{\gamma})}.
\]

Using homogeneity, we normalize $u_n$ so that $\|u_n\|_{L^p(\Omega_{\gamma})}=1$, setting otherwise  $v_n = u_n / \|u_n\|_{L^p(\Omega_{\gamma})}$). Then
\[
\|\nabla u_n\|_{L^p(\Omega_{\gamma})}<\frac{1}{n}\quad\Longrightarrow\quad 
\|\nabla u_n\|_{L^p(\Omega_{\gamma})}\to 0\ \text{as}\ n\to\infty,
\]
and since $\|u_n\|_{L^p(\Omega_{\gamma})}=1$ we have that $\{u_n\}$ is bounded in $W^{1,p}(\Omega_{\gamma})$.

By Theorem~\ref{thmemb} we have that 
the embedding operator
\[
W^{1,p}(\Omega_{\gamma})\hookrightarrow L^p(\Omega_{\gamma})
\]
is compact. 

Therefore, by passing to a subsequence if necessary, we obtain
\[
u_n\rightharpoonup u\ \text{in}\ W^{1,p}(\Omega_{\gamma})\quad\text{and}\quad 
u_n\to u\ \text{in}\ L^p(\Omega_{\gamma})
\]
for some $u\in W^{1,p}(\Omega_{\gamma})$. Moreover, $\|\nabla u_n\|_{L^p}\to 0$ implies 
$\nabla u_n \to 0$ strongly in $L^p(\Omega_{\gamma})$, and therefore $\nabla u=0$ a.e. in $\Omega_{\gamma}$. Since $\Omega_{\gamma}$ is connected, $u\equiv c$ is a constant.

Next, by the compactness of the trace embedding
\[
i:W^{1,p}(\Omega_{\gamma})\hookrightarrow L^p_w(\partial\Omega_{\gamma}),
\]
we also have $u_n\to u$ in $L^p_w(\partial\Omega_{\gamma})$ (passing to a subsequence if necessary). 

Passing to the limit in the boundary orthogonality, we have
\[
0=\lim_{n\to\infty}\int_{\partial\Omega_{\gamma}} |u_n|^{p-2} u_n \, w \, ds
= \int_{\partial\Omega_{\gamma}} |u|^{p-2} u \, w \, ds
= |c|^{p-2}c \int_{\partial\Omega_{\gamma}} w \, ds.
\]

Since $w\ge 0$ and $\int_{\partial\Omega_{\gamma}} w\,ds>0$, it follows that $c=0$, i.e., $u\equiv 0$.

However, the strong convergence $u_n\to u$ in $L^p(\Omega_{\gamma})$ together with 
$\|u_n\|_{L^p(\Omega_{\gamma})}=1$ implies $\|u\|_{L^p(\Omega_{\gamma})}=1$, which contradicts $u\equiv 0$. This contradiction proves the existence of $C>0$ such that
\[
\|u\|_{L^p(\Omega_{\gamma})}\le C\,\|\nabla u\|_{L^p(\Omega_{\gamma})}
\]
for all $u\in W^{1,p}(\Omega_{\gamma})$ satisfying the weighted boundary orthogonality.
\end{proof}

\medskip
\noindent
Hence, the weighted boundary orthogonality condition eliminates constant functions, and Theorem~\ref{FP} shows that on this class of functions the $W^{1,p}(\Omega_\gamma)$-norm 
is equivalent to the seminorm $\|\nabla u\|_{L^p(\Omega_\gamma)}$. In particular, the gradient term controls the full $L^p$-norm of $u$.

\section{Weighted Steklov $p$-eigenvalue problem}

Let $\Omega_{\gamma}$ be an outward $\gamma$-cuspidal domain defined by (\ref{cusp}), and let the weight function $w$ be given by (\ref{weight}). 
We consider the weighted Steklov $p$-eigenvalue problem, for $1 < p < \infty$:
\begin{equation}
\label{WePN}
\begin{cases}
-\mathrm{div}(|\nabla u|^{p-2}\nabla u) = 0 & \text{in } \Omega_{\gamma},\\
|\nabla u|^{p-2} \nabla u \cdot \nu = \lambda\, w\, |u|^{p-2}u & \text{on } \partial\Omega_{\gamma}.
\end{cases}
\end{equation}

We study this problem in the weak formulation.

\begin{definition}
\label{def-p}
We say that $(\lambda,u)\in \mathbb{R} \times (W^{1,p}(\Omega_{\gamma})\setminus\{0\})$ 
is an eigenpair of \eqref{WePN} if 
\begin{equation}
\label{skwksolN}
\int_{\Omega_{\gamma}} |\nabla u|^{p-2} \nabla u \cdot \nabla v\,dx
= \lambda \int_{\partial \Omega_{\gamma}} |u|^{p-2} u\, v\, w\,ds(x),
\end{equation}
for every $v\in W^{1,p}(\Omega_{\gamma})$.
\end{definition}

We refer to $\lambda$ as an eigenvalue and $u$ as an eigenfunction of \eqref{WePN} corresponding to the eigenvalue $\lambda$.

The equation \eqref{WePN} represents the Euler-Lagrange equation corresponding, in its weak formulation \eqref{skwksolN}, to the functional
$$
F = \|\nabla v\|_{L^p(\Omega_{\gamma})}^p,
$$
restricted to the set
$$
S = \left\{ u \in W^{1,p}(\Omega_{\gamma}) : \|u\|_{L^p_w(\partial \Omega_{\gamma})} = 1 \right\}.
$$

The following theorem provides the existence and variational characterization of the first non-trivial eigenvalue \( \lambda_p \) associated with the weighted Steklov \( p \)-eigenvalue problem, described in terms of the minimum of the Rayleigh quotient. The orthogonality condition
$$
\int_{\partial \Omega_{\gamma}} |u|^{p-2} u\, w\, ds = 0
$$
ensures that the eigenfunction is non-trivial. 

\begin{theorem}\label{minthm}
Let $\Omega_{\gamma}$ be an outward $\gamma$-cuspidal domain defined by (\ref{cusp}) and the weight $w$ be defined by (\ref{weight}). Then 
for the weighted Steklov $p$-eigenvalue problem \eqref{WePN}, $1<p<\infty$, there exists $u\in W^{1,p}(\Omega_{\gamma})\setminus\{0\}$, such that, the first non-trivial eigenvalue $\lambda_{p}$ is given by
\begin{multline}
\label{min}
\lambda_{p}=
\inf \left\{\frac{\|\nabla v\|_{L^p(\Omega_{\gamma})}^p}{\|v\|_{L^p_w(\partial \Omega_{\gamma})}^p} : v \in W^{1,p}(\Omega_{\gamma}) \setminus \{0\},
\int_{\partial \Omega_{\gamma}} |v|^{p-2}v w\,ds=0 \right\}\\
=\frac{\|\nabla u\|_{L^p(\Omega_{\gamma})}^p}{\|u\|_{L^p_w(\partial \Omega_{\gamma})}^p}.
\end{multline}
\end{theorem}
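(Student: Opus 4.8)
The plan is to realize $\lambda_p$ as the minimum of the Rayleigh quotient $R(v)=\|\nabla v\|_{L^p(\Omega_\gamma)}^p/\|v\|_{L^p_w(\partial\Omega_\gamma)}^p$ over the admissible set
\[
M=\Bigl\{v\in W^{1,p}(\Omega_\gamma)\setminus\{0\}:\int_{\partial\Omega_\gamma}|v|^{p-2}v\,w\,ds=0\Bigr\}
\]
by the direct method of the calculus of variations, and then to show that a minimizer is a weak solution in the sense of Definition~\ref{def-p}. I would first observe that the infimum $\lambda_p$ is strictly positive: the boundedness part of the compact trace embedding (Theorem~\ref{trace}) gives $\|v\|_{L^p_w(\partial\Omega_\gamma)}\le C_{\mathrm{tr}}\|v\|_{W^{1,p}(\Omega_\gamma)}$, and the Friedrichs--Poincar\'e inequality (Theorem~\ref{FP}), applicable precisely because every $v\in M$ satisfies the orthogonality condition, upgrades this to $\|v\|_{L^p_w(\partial\Omega_\gamma)}^p\le C\,\|\nabla v\|_{L^p(\Omega_\gamma)}^p$ on $M$. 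Hence $R(v)\ge C^{-1}>0$, so $\lambda_p>0$.

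Next I would run the direct method. Using the $0$-homogeneity of $R$, normalize a minimizing sequence so that $\|v_n\|_{L^p_w(\partial\Omega_\gamma)}=1$ and $\|\nabla v_n\|_{L^p(\Omega_\gamma)}^p\to\lambda_p$. Friedrichs--Poincar\'e then bounds $\{v_n\}$ in $W^{1,p}(\Omega_\gamma)$, and since this space is reflexive (Lemma~\ref{Xuthm}), a subsequence satisfies $v_n\rightharpoonup u$. By the \emph{compactness} of the trace embedding (Theorem~\ref{trace}) we get $v_n\to u$ strongly in $L^p_w(\partial\Omega_\gamma)$; strong convergence preserves the normalization (so $\|u\|_{L^p_w(\partial\Omega_\gamma)}=1$ and $u\ne 0$) and, through the continuity of the Nemytskii map $v\mapsto|v|^{p-2}v$ between the associated weighted Lebesgue spaces, also the orthogonality condition, so $u\in M$. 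Weak lower semicontinuity of $v\mapsto\|\nabla v\|_{L^p(\Omega_\gamma)}^p$ yields $\|\nabla u\|_{L^p(\Omega_\gamma)}^p\le\liminf\|\nabla v_n\|_{L^p(\Omega_\gamma)}^p=\lambda_p$; since $u$ is admissible, $R(u)\ge\lambda_p$, forcing $R(u)=\lambda_p$, i.e. the infimum in \eqref{min} is attained at $u$.

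It remains to show that $u$ solves \eqref{skwksolN} with $\lambda=\lambda_p$. Writing $F(v)=\|\nabla v\|_{L^p(\Omega_\gamma)}^p$, $G(v)=\|v\|_{L^p_w(\partial\Omega_\gamma)}^p$ and $H(v)=\int_{\partial\Omega_\gamma}|v|^{p-2}v\,w\,ds$, the minimizer $u$ of $F$ on $\{G=1\}\cap\{H=0\}$ produces, via the Lagrange multiplier rule, scalars $\lambda,\tilde\mu$ with
\[
\int_{\Omega_{\gamma}}|\nabla u|^{p-2}\nabla u\cdot\nabla\phi\,dx=\lambda\int_{\partial\Omega_{\gamma}}|u|^{p-2}u\,\phi\,w\,ds+\tilde\mu\int_{\partial\Omega_{\gamma}}|u|^{p-2}\phi\,w\,ds
\]
for all $\phi\in W^{1,p}(\Omega_\gamma)$. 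Testing with $\phi\equiv 1$, the left-hand side vanishes and the first boundary term vanishes by the orthogonality of $u$, leaving $\tilde\mu\int_{\partial\Omega_\gamma}|u|^{p-2}w\,ds=0$; since $u\ne0$ in $L^p_w(\partial\Omega_\gamma)$ the remaining integral is positive, so $\tilde\mu=0$ and we recover exactly \eqref{skwksolN}. Finally, testing with $\phi=u$ and using $\|u\|_{L^p_w(\partial\Omega_\gamma)}=1$ gives $\|\nabla u\|_{L^p(\Omega_\gamma)}^p=\lambda$, which identifies $\lambda=\lambda_p$.

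I expect the last paragraph to be the main obstacle. The delicate points are justifying the Lagrange multiplier rule and, above all, showing that the multiplier $\tilde\mu$ attached to the nonlinear orthogonality constraint vanishes. For $1<p<2$ the functional $H$ and its derivative involve $|u|^{p-2}$, which is singular on the zero set of $u$, so one must verify carefully that $H$ is continuously differentiable near $u$ on $W^{1,p}(\Omega_\gamma)$ and that $\int_{\partial\Omega_\gamma}|u|^{p-2}w\,ds$ is both finite and positive. The continuity and differentiability of the relevant Nemytskii operators on the weighted space $L^p_w(\partial\Omega_\gamma)$, combined once more with the compact trace embedding of Theorem~\ref{trace}, are precisely what make these manipulations legitimate.
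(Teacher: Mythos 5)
Your first two paragraphs prove exactly what Theorem \ref{minthm} asserts, and they do it by essentially the same route as the paper: the Friedrichs--Poincar\'e inequality of Theorem \ref{FP} to bound a minimizing sequence in $W^{1,p}(\Omega_\gamma)$, reflexivity plus the compact trace embedding of Theorem \ref{trace} to extract a weak limit $u$ with strong boundary convergence, continuity of the map $v\mapsto|v|^{p-2}v$ to pass the orthogonality constraint to the limit, and weak lower semicontinuity of the gradient norm to conclude attainment. The only cosmetic differences: you normalize $\|v_n\|_{L^p_w(\partial\Omega_\gamma)}=1$ and prove $\lambda_p>0$ up front (trace boundedness combined with Theorem \ref{FP}), whereas the paper normalizes $\|\nabla u_k\|_{L^p(\Omega_\gamma)}=1$, phrases the minimizing sequence through perturbed functionals $H_{1/k}$, and obtains $\lambda_p>0$ and $u\neq 0$ together from the limit inequality $\lambda_p\|u\|^p_{L^p_w(\partial\Omega_\gamma)}\ge 1$. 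Both normalizations are legitimate; yours makes the non-triviality of $u$ immediate.

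Your third paragraph goes beyond the paper: the paper stops at attainment of the infimum and never verifies that the minimizer satisfies the weak formulation \eqref{skwksolN}, even though calling $\lambda_p$ an ``eigenvalue'' implicitly requires it. That step is a genuine addition, and your treatment is sound for $p\ge 2$: there $F$, $G$, $H$ are $C^1$, the derivatives $G'(u)$ and $H'(u)$ are linearly independent (since $H'(u)u=(p-1)H(u)=0$ while $G'(u)u=p>0$), so the Lagrange multiplier rule applies, and testing with $\phi\equiv 1$ kills $\tilde\mu$ because $0<\int_{\partial\Omega_\gamma}|u|^{p-2}w\,ds<\infty$ (finiteness by H\"older, positivity because $u\neq 0$ in $L^p_w(\partial\Omega_\gamma)$). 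But, as you yourself flag, for $1<p<2$ the functional $H$ is not $C^1$: its formal derivative involves $|u|^{p-2}$, which can fail to be integrable across the zero set of the trace of $u$, so the multiplier rule cannot be invoked as stated and the vanishing of $\tilde\mu$ is not established in that range. This does not affect the correctness of your proof of the statement as written, since Theorem \ref{minthm} claims only the variational characterization and its attainment; but if you intend the minimizer to be an eigenfunction in the sense of Definition \ref{def-p} for all $1<p<\infty$, the case $1<p<2$ is a real gap --- one that the paper itself also leaves open, since it never performs the Euler--Lagrange step at all.
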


\begin{proof}
Let us consider the variational characterization of the first non-trivial eigenvalue \( \lambda_p \) given by
\[
\lambda_p = \inf \left\{ \frac{\|\nabla v\|_{L^p(\Omega_\gamma)}^p}{\|v\|_{L^p_w(\partial \Omega_\gamma)}^p} : v \in W^{1,p}(\Omega_\gamma) \setminus \{0\},\ \int_{\partial \Omega_\gamma} |v|^{p-2} v\, w\, ds = 0 \right\}.
\]
If \( \|v\|_{L^p_w(\partial \Omega_\gamma)} = 0 \), then the Rayleigh quotient is infinite, and such functions do not contribute to the infimum.

By Theorem~\ref{trace}, the trace operator
\[
i: W^{1,p}(\Omega_\gamma) \hookrightarrow L^p_w(\partial \Omega_\gamma)
\]
is compact.  

For each \( k \in \mathbb{N} \), define the perturbed functional
\[
H_{1/k}(v) = \|\nabla v\|_{L^p(\Omega_\gamma)}^p - \left( \lambda_p + \frac{1}{k} \right) \|v\|_{L^p_w(\partial \Omega_\gamma)}^p.
\]
By the definition of infimum, for each \( k \), there exists \( u_k \in W^{1,p}(\Omega_\gamma) \setminus \{0\} \) such that
\[
\int_{\partial \Omega_\gamma} |u_k|^{p-2} u_k\, w\, ds = 0,\quad \text{and} \quad H_{1/k}(u_k) < 0.
\]

Without loss of generality, normalize:
\[
\|\nabla u_k\|_{L^p(\Omega_\gamma)}^p = 1.
\]

Then by Theorem~\ref{FP} the sequence \( \{u_k\} \) is bounded in \( W^{1,p}(\Omega_\gamma) \), and by reflexivity and compactness of the trace operator, there exists \( u \in W^{1,p}(\Omega_\gamma) \) such that:
\begin{itemize}
    \item \( u_k \rightharpoonup u \) weakly in \( W^{1,p}(\Omega_\gamma) \),
    \item \( u_k \to u \) strongly in \( L^p_w(\partial \Omega_\gamma) \),
    \item \( \nabla u_k \rightharpoonup \nabla u \) weakly in \( L^p(\Omega_\gamma) \).
\end{itemize}

Since $u_k \to u$ strongly in $L^p_w(\partial\Omega_\gamma)$, and the map
$u \mapsto |u|^{p-2}u$ is continuous from $L^p$ to $L^{p/(p-1)}$, we obtain
\[
\int_{\partial \Omega_\gamma} |u|^{p-2} u\, w\, ds = \lim_{k \to \infty} \int_{\partial \Omega_\gamma} |u_k|^{p-2} u_k\, w\, ds = 0.
\]

From \( H_{1/k}(u_k) < 0 \), we get:
\begin{equation}\label{ineq}
1 - \left( \lambda_p + \frac{1}{k} \right) \|u_k\|_{L^p_w(\partial \Omega_\gamma)}^p < 0.
\end{equation}

Letting \( k \to \infty \), and using lower semicontinuity of the norm:
\[
\|\nabla u\|_{L^p(\Omega_\gamma)}^p \leq \liminf_{k \to \infty} \|\nabla u_k\|_{L^p(\Omega_\gamma)}^p = 1,
\]
and
\[
\|u\|_{L^p_w(\partial \Omega_\gamma)}^p = \lim_{k \to \infty} \|u_k\|_{L^p_w(\partial \Omega_\gamma)}^p.
\]

In particular,  passing to the limit in \eqref{ineq} as $k\to \infty$ we get 
\[
\lambda_p \|u\|_{L^p_w(\partial \Omega_\gamma)}^p \geq 1,
\]
which implies that both  \( \lambda_p > 0 \) and \( u \not\equiv 0 \).

Therefore,
\[
\lambda_p \geq \frac{\|\nabla u\|_{L^p(\Omega_\gamma)}^p}{\|u\|_{L^p_w(\partial \Omega_\gamma)}^p}.
\]

But since \( u \) satisfies the constraint and belongs to the admissible set, by definition of \( \lambda_p \), we also have:
\[
\lambda_p \leq \frac{\|\nabla u\|_{L^p(\Omega_\gamma)}^p}{\|u\|_{L^p_w(\partial \Omega_\gamma)}^p}.
\]
Hence,
\[
\lambda_p = \frac{\|\nabla u\|_{L^p(\Omega_\gamma)}^p}{\|u\|_{L^p_w(\partial \Omega_\gamma)}^p},
\]
and \( u \) is a minimizer.
\end{proof}

\vskip 0.3cm
\noindent
\textbf{Acknowledgments.}
The first named author is a member of the Gruppo Nazionale per l'Analisi Matematica, la Probabilit\`a e le loro Applicazioni (GNAMPA) of the Istituto Nazionale di Alta Matematica (INdAM) and acknowledges support from the project ``Perturbation problems and asymptotics for elliptic differential equations: variational and potential theoretic methods'' funded by the European Union -- Next Generation EU and by MUR Progetti di Ricerca di Rilevante Interesse Nazionale (PRIN) Bando 2022, grant 2022SENJZ3.

\vskip 0.5cm

\noindent {\textsf{Pier Domenico Lamberti\\
Department of Industrial Systems Engineering and Management,\\
University of Padova,\\ Padova, Italy}\\
\textsf{e-mail}: lamberti@math.unipd.it\\

\vskip 0.1cm
\noindent {\textsf{Alexander Ukhlov\\
Department of Mathematics,\\
Ben-Gurion University of the Negev,\\ Beer-Sheva,Israel}\\
\textsf{e-mail}: ukhlov@math.bgu.ac.il\\

\end{document}